\documentclass[12pt,a4paper]{article}
\usepackage{amsmath,amsfonts,amsthm,amssymb,latexsym}
\usepackage{xspace}
\usepackage{xcolor}
\usepackage{enumitem}
\usepackage{graphicx}
\date{27th May 2024}

\newcommand{\Aut}{\mathrm{Aut}}
\newcommand{\Out}{\mathrm{Out}}
\newcommand{\Sym}{\mathrm{Sym}}
\newcommand{\Alt}{\mathrm{Alt}}
\newcommand{\PSL}{\mathrm{PSL}}
\newcommand{\PGamL}{\mathrm{P \Gamma L}}

\newcommand{\Fun}{\mathrm{Fun}}

%\newcommand{\overline}{{\rm \sc 0l}\xspace}

% made comments smaller because they take up a lot of space
\newtheorem{theorem}{Theorem}[section] % 1st argument is your name for it
\newtheorem{lemma}[theorem]{Lemma} % 2nd argument is what is printed

\newtheorem{corollary}[theorem]{Corollary}

\parskip 1.0ex plus 0.5ex minus 0.5ex
\parindent 0pt
\title{Quotients of permutation groups by nonabelian minimal normal subgroups}
\author{Derek F. Holt}
\begin{document}
\maketitle
\begin{abstract}
We prove that a quotient $G/N$ of a subgroup $G$ of $\Sym(n)$ by a nonabelian
minimal normal subgroup $N$ of $G$ embeds into $\Sym(m)$ for some $m<n$.
This result was proved previously in \cite{Chamberlain}, and we also prove
that, if $G$ is transitive then we can take $m \le 2n/5$.
\end{abstract}

\section{Introduction}
For a finite group $G$ we define $P(G)$ to be the smallest degree of a
faithful permutation representation of $G$; that is the smallest $n$ for which
there is an embedding $G \to \Sym(n)$. It was shown by Neumann in~\cite{Neu}
that $P(G/N)$ can be exponentially larger than $P(G)$ for normal
subgroups $N$ of $G$. In~\cite{EP}, Easdown and Praeger called
a quotient \emph{exceptional} if $P(G/N) > P(G)$ and proved that
various type of quotients of finite groups $G$, such as $G/O_p(G)$ for
a prime $p$, and $G/S(G)$ with $S(G)$ the solvable radical of $G$,
are not exceptional.

It was proved in \cite{Chamberlain} that $G/N$ is not exceptional when $N$ is
a nonabelian minimal normal subgroup of $G$, and we prove a stronger result for
transitive subgroups $G$ of $\Sym(n)$.

\begin{theorem}\label{thm:main}
Let $G \le \Sym(n)$, and let $N$ be a nonabelian minimal normal subgroup of $G$.
Then $G/N$ embeds into $\Sym(m)$ for some $m<n$.
Furthermore, if $G$ is a transitive subgroup of $\Sym(n)$, then
we can choose $m \le 2n/5$.
\end{theorem}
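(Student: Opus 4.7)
The plan is to decompose the action of $G$ on $[n]$ into $G$-orbits, replace each orbit on which $N$ acts faithfully by a smaller $G/N$-equivariant set, and combine; the faithfulness of the resulting $G/N$-action rests on $Z(N)=1$. Let $\Omega_1,\ldots,\Omega_r$ be the $G$-orbits with kernels $K_i$ of $G\to\Sym(\Omega_i)$. Since $N\cap K_i$ is normal in $G$ and contained in the minimal normal subgroup $N$, it equals $1$ or $N$; this partitions the orbits into $I_1$ (where $N\le K_i$) and $I_2$ (where $N\cap K_i=1$), and $I_2\neq\emptyset$ because $\bigcap_i K_i=1$ and $N\neq 1$. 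For $i\in I_1$ the action on $\Omega_i$ factors through $G/N$. For $i\in I_2$, write $H_i=G/K_i$ for the faithful image, a transitive subgroup of $\Sym(\Omega_i)$ containing $N$ as a faithful normal subgroup. I would construct, for each $i\in I_2$, a set $S_i$ with $|S_i|<|\Omega_i|$ and an $H_i/N$-faithful action on $S_i$, then show that the natural $G$-action on $\bigsqcup_{i\in I_1}\Omega_i\sqcup\bigsqcup_{i\in I_2}S_i$ descends to a faithful action of $G/N$.

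The combined action is $G/N$-faithful exactly when $\bigcap_{i\in I_1}K_i\cap\bigcap_{i\in I_2}NK_i=N$. To prove this equality, take $g$ in the intersection; for each $i\in I_2$ there is a unique $n_i^*\in N$ with $g\equiv n_i^*\pmod{K_i}$ (uniqueness from $N\cap K_i=1$). Writing $g=n_i^*k_i$ with $k_i\in K_i$ acting trivially on $\Omega_i$, the elements $gng^{-1}$ and $n_i^*n(n_i^*)^{-1}$ of $N$ induce the same permutation of $\Omega_i$ for every $n\in N$; faithfulness of $N$ on $\Omega_i$ then forces $gng^{-1}=n_i^*n(n_i^*)^{-1}$, hence $(n_i^*)^{-1}g\in C_G(N)$. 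Comparing two indices $i,j\in I_2$ gives $(n_i^*)^{-1}n_j^*\in N\cap C_G(N)=Z(N)$, and $Z(N)=1$ because $N=T^k$ for some nonabelian simple $T$. All the $n_i^*$ therefore coincide with a single $n\in N$, and $gn^{-1}$ acts trivially on every $\Omega_i$, so $g=n\in N$.

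The remaining and principal obstacle is the per-orbit reduction: for the general statement, showing that $H_i/N$ admits a faithful permutation representation of degree $<|\Omega_i|$; for the transitive case of the theorem (where $I_2=\{1\}$ and $\Omega_1=[n]$) sharpening this to degree $\le 2n/5$. When $N$ is not transitive on $\Omega_i$, its orbits form a $G$-block system of size $|\Omega_i|/d$ with $d\ge 5$ (since $N=T^k$ has no faithful transitive action of degree less than the minimal faithful degree of $T$, which is at least $5$), giving a natural candidate quotient on which $N$ acts trivially; the potential non-faithfulness of $H_i/N$ on these blocks must be handled by augmenting with additional data. When $N$ is transitive on $\Omega_i$, one uses $H_i=N\cdot(H_i)_x$ to get $H_i/N\cong(H_i)_x/N_x$ with $N_x=(H_i)_x\cap N$, and extracts a faithful small-degree action from the structure $N=T^k$ and the way $(H_i)_x$ permutes the simple factors. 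The constant $2/5$ is forced by the extremal examples $\Sym(5)\wr\Sym(k)$ acting imprimitively on $5k$ points with $N=\Alt(5)^k$ and quotient $\Z_2\wr\Sym(k)$ of minimum faithful degree $2k$, so the final case analysis must split on $(T,k)$ and on the O'Nan--Scott type of $H_i$, using the bound $5$ for the minimum faithful degree of $T$ with equality precisely for $T=\Alt(5)$.
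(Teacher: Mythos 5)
Your reduction to the transitive case is correct and in fact cleaner than the paper's: from $g\in\bigcap_{i\in I_1}K_i\cap\bigcap_{i\in I_2}NK_i$ you write $g=n_i^*k_i$ and observe that $k_i$ centralises the faithful image of $N$ on $\Omega_i$, hence centralises $N$, so the $n_i^*$ agree modulo $Z(N)=1$; the paper instead proceeds two orbits at a time using a projection lemma about normal subgroups of $K\times N$. Both work, and yours handles all orbits simultaneously.

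However, what you label ``the remaining and principal obstacle'' is not a residual technicality --- it is the entire content of the theorem, and your sketch of it has genuine gaps. (1) When $N$ is intransitive on an orbit, passing to the block system $\Gamma$ of $N$-orbits gives an action whose kernel can strictly contain $N$, as you note; but ``augmenting with additional data'' is not an argument, and doing it naively (e.g.\ adding points for the block actions) can destroy both faithfulness bookkeeping and the $2n/5$ bound. The paper resolves this by embedding $G$ into $G_\Delta^\Delta\wr G^\Gamma$, applying the inductive hypothesis (in the form of the corollary, since $N^\Delta$ need not be minimal normal in $G_\Delta^\Delta$) to get $G_\Delta^\Delta/N^\Delta\hookrightarrow\Sym(\Delta')$ with $|\Delta'|\le 2|\Delta|/5$, and verifying by a careful kernel computation that $G\to (G_\Delta^\Delta/N^\Delta)\wr G^\Gamma$ has kernel exactly $N$; the product set $\Delta'\times\Gamma$ then has size $\le 2n/5$. (2) When $N$ is transitive, the isomorphism $H_i/N\cong (H_i)_x/N_x$ gives you a group but no small permutation representation of it, and no bound. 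The paper's argument here splits on whether $C_G(N)=1$, whether $H\cap N$ projects onto $S$ or is trivial, and otherwise constructs a block system through a maximal subgroup containing $HQ^kC$ (with $Q=N_S(R)$), shows the block action is primitive of almost simple product type inside $T\wr\Sym(k)$, and extracts a degree-$k|T/S|^k|M:H|$ representation of $G/N$ from the orbits of $\Sym(k)$ in the product action. The $2/5$ constant then rests on the quantitative inputs $P(U)^2<|S|$ and $|\Out(S)|<P(S)/3$ (with four exceptional socles handled separately), none of which appear in your proposal. As written, the proposal proves the gluing step but not the per-orbit degree reduction, so the theorem is not established.
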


The examples $G=\Sym(5)$, $N=\Alt(5)$  with $n=5$, and $G=\PGamL(2,9)$,
$N=\PSL(2,9)$ with $n=10$, together with
wreath products of these groups by various other transitive groups, show that
the bound $m \le 2n/5$ for transitive subgroups of $\Sym(n)$ is best possible.

By repeated applications of this result, we get the immediate corollary:
\begin{corollary}\label{cor:main}
Let $G \le \Sym(n)$, and let $N$ be a nontrivial normal subgroup of $G$ with
no abelian composition factors. Then $G/N$ embeds into $\Sym(m)$ for some $m<n$.
Furthermore, if $G$ is a transitive subgroup of $\Sym(n)$, then
we can choose $m \le 2n/5$.
\end{corollary}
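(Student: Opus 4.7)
The plan is to iterate Theorem~\ref{thm:main} along a chain of normal subgroups of $G$ refining $1 \unlhd N$. To build the chain, set $N_0 = 1$; having chosen $N_i \lhd G$ with $N_i \lneq N$, let $N_{i+1}/N_i$ be a minimal normal subgroup of $G/N_i$ contained in $N/N_i$. Any such minimal normal subgroup is characteristically simple, hence a direct product of isomorphic simple groups; its composition factors are composition factors of $N$, which by hypothesis are all nonabelian, so $N_{i+1}/N_i$ is a \emph{nonabelian} minimal normal subgroup of $G/N_i$. Since $N$ is finite, this chain terminates with $N_k = N$ for some $k \ge 1$.

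Next I would apply Theorem~\ref{thm:main} $k$ times. Writing $m_0 = n$ and assuming inductively that $G/N_i$ embeds into $\Sym(m_i)$, the theorem applied to the pair $(G/N_i, N_{i+1}/N_i)$ produces an embedding $(G/N_i)/(N_{i+1}/N_i) \cong G/N_{i+1} \hookrightarrow \Sym(m_{i+1})$ with $m_{i+1} < m_i$. After $k$ iterations we obtain $G/N \hookrightarrow \Sym(m_k)$ with $m_k < n$, which gives the first assertion.

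The only nontrivial point is the transitive refinement. After the first step the embedding $G/N_1 \hookrightarrow \Sym(m_1)$ need not be transitive, so the $m \le 2n/5$ clause of Theorem~\ref{thm:main} is available only at the initial iteration. This is enough: if $G \le \Sym(n)$ is transitive then the first iteration already gives $m_1 \le 2n/5$, and since each subsequent iteration strictly decreases the degree, we still have $m_k \le m_1 \le 2n/5$. I expect this loss of transitivity after the first step to be the sole subtle point; otherwise the corollary is a purely formal consequence of Theorem~\ref{thm:main}.
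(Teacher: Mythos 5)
Your proof is correct and is exactly the iteration the paper intends when it states that the corollary follows ``by repeated applications'' of Theorem~\ref{thm:main}; the paper supplies no further detail, and your construction of the chain of nonabelian chief factors inside $N$ is the natural way to make that precise. Your handling of the transitive refinement --- the $2n/5$ bound is secured at the first iteration and is only improved by the strictly decreasing degrees thereafter --- correctly addresses the one subtle point.
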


One motivation for investigating this particular problem was the algorithm
described in~\cite{LT} for finding a smallest sized generating set of a finite
group $G$, and the probabilistic variation and Magma implementation described
in~\cite{HT}. That algorithm starts by computing a chief series of a finite
group $G$. The chief series computed by the Magma intrinsic has the
chief factors in $S(G)$ at the bottom of the series, but we observed that
the algorithm often runs faster when using a chief series with as many
nonabelian chief factors at the bottom as possible. Such a chief series can be
found more easily in a permutation group if we have an efficient method
of representing quotients $G/N$ for which the composition factors of $N$ are
nonabelian.

\section{Preliminary lemmas}
We assume throughout the paper that $G$ is a finite group,
although the first two lemmas are valid for arbitrary groups.

\begin{lemma}\label{lem:ntproj}
Let $N \unlhd G$ with $N$ a direct product of nonabelian
simple groups,
and let $K$ be a normal subgroup of $G$ with $K \cap N = 1$. Let $L$ be a normal
subgroup of $G$ with $L \le KN \cong K \times N$. Then $L$ contains all simple
direct factors of $N$ onto which elements of $L$ project non-trivially.
\end{lemma}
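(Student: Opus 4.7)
The plan is to identify, for each simple direct factor $T_i$ of $N$, the intersection $L \cap T_i$ as the crucial object, and argue it is all of $T_i$ whenever $L$ projects onto $T_i$ non-trivially.

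First I would write $N = T_1 \times \cdots \times T_r$, so that $KN = K \times T_1 \times \cdots \times T_r$, and denote by $\pi_i : KN \to T_i$ the coordinate projection. A quick observation is that $L \cap T_i$ is normal in $T_i$: since $L \unlhd G$ and $T_i \le N \le G$, conjugation by any $u \in T_i$ carries $L \cap T_i$ into itself. As $T_i$ is nonabelian simple, $L \cap T_i$ is either trivial or equal to $T_i$, and the whole lemma reduces to producing one nontrivial element of $L \cap T_i$ whenever $\pi_i(L) \ne 1$.

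The production step uses the commutator trick. Suppose $\ell \in L$ has $\pi_i(\ell) = t \ne 1$. Write $\ell = a \cdot t$ where $a \in K \times \prod_{j \ne i} T_j$ is the part outside $T_i$. For any $s \in T_i$, the element $s$ commutes with every factor of $a$ (since $K$ commutes with $N$ inside $K \times N$, and distinct simple direct factors of $N$ commute). A short calculation then gives
\[
[s,\ell] = s^{-1}\ell^{-1}s\ell = s^{-1} t^{-1} s\, t = [s,t] \in T_i,
\]
and $[s,\ell] \in L$ because $L \unlhd G$. Hence $[s,t] \in L \cap T_i$ for every $s \in T_i$.

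Finally, since $T_i$ is nonabelian simple it has trivial centre, so some $s \in T_i$ fails to commute with $t$; for that $s$, $[s,t]$ is a nontrivial element of $L \cap T_i$. By simplicity of $T_i$ we conclude $L \cap T_i = T_i$, i.e.\ $T_i \le L$, which is what was required. I do not expect a genuine obstacle here; the only thing to be careful about is the bookkeeping that shows $[s,\ell]$ really lives in $T_i$, which is exactly where the hypothesis $K \cap N = 1$ (giving $KN \cong K \times N$ and hence the commuting of $s$ with the $K$-part of $\ell$) is used.
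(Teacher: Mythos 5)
Your proposal is correct and follows essentially the same route as the paper: the same commutator trick $[s,\ell]=[s,t]\in L\cap T_i$, the same appeal to $Z(T_i)=1$ to make this nontrivial, and the same use of simplicity of $T_i$ to conclude $T_i\le L$. The only (harmless) difference is that you spell out explicitly why $L\cap T_i\unlhd T_i$, which the paper leaves implicit.
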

\begin{proof} Identify $KN$ with $K \times N$, and suppose that
there exists $(k,n) \in L$ such that $n$ has non-trivial projection $n_s$ onto
a simple direct factor $S$ of $N$. Then, since $Z(S) = 1$, there exists
$s \in S$ with $s^{-1}n_s s \ne n_s$ and then the commutator $[(k,n),(1,s)]
= (1,[n_s,s]) \in L \cap S$, so $L \cap S \ne 1$ and the simplicity of $S$
implies that $S \le L$.
\end{proof}

\begin{lemma}\label{lem:normsd}
Let $N = S^k$ for a nonabelian simple group $S$, and let
$H \le N$ be a subdirect product (i.e. $H$ projects onto each of the
$k$ simple factors of $N$). Then $N_N(H)=H$.
\end{lemma}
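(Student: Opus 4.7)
The plan is to reduce, via the standard structure theorem for subdirect products of a direct power of a nonabelian simple group, to the case where $H$ is a ``full diagonal'' subgroup of $S^m$, for which the claim is a short calculation using $Z(S)=1$.

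First I would recall (or prove by induction on $k$, peeling off one factor at a time and invoking Goursat's lemma together with simplicity of $S$) that any subdirect product $H \le S^k$ decomposes as $H = D_1 \times \cdots \times D_r$, where $\{B_1,\ldots,B_r\}$ is a partition of $\{1,\ldots,k\}$ and each $D_j \le \prod_{i \in B_j} S_i$ is a full diagonal subgroup
\[
D_j = \{(\phi^{(j)}_i(s))_{i \in B_j} : s \in S\}
\]
for some automorphisms $\phi^{(j)}_i \in \Aut(S)$ (one of which may be taken to be the identity). Alternatively, this decomposition can be read off by applying Lemma~\ref{lem:ntproj} to the minimal normal subgroups of $H$. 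Because $N = \prod_j \prod_{i \in B_j} S_i$ and $H = \prod_j D_j$ both respect the partition, normalisers split coordinate-wise,
\[
N_N(H) = \prod_{j=1}^{r} N_{\prod_{i \in B_j} S_i}(D_j),
\]
reducing the claim to the case where $H$ is a full diagonal $D = \{(\phi_1(s),\ldots,\phi_m(s)) : s \in S\}$ of $S^m$ with $\phi_1 = \mathrm{id}$.

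In that setting, let $x = (x_1,\ldots,x_m) \in N_{S^m}(D)$. Conjugation of a generic element of $D$ by $x$ must land back in $D$, so for every $s \in S$ there is some $t \in S$ with $x_i^{-1}\phi_i(s)x_i = \phi_i(t)$ for all $i$. The $i=1$ equation forces $t = x_1^{-1}sx_1$. Substituting this into the remaining equations and using that $\phi_i$ is an automorphism gives
\[
x_i^{-1}\phi_i(s)x_i = \phi_i(x_1)^{-1}\phi_i(s)\phi_i(x_1) \qquad \text{for all } s \in S.
\]
Hence $x_i\phi_i(x_1)^{-1}$ centralises $\phi_i(S) = S$, and since $Z(S)=1$ we conclude $x_i = \phi_i(x_1)$ for every $i$. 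Therefore $x = (\phi_1(x_1),\ldots,\phi_m(x_1)) \in D$, which is what we wanted.

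The main obstacle I expect is presenting the subdirect-product decomposition cleanly enough to feel self-contained rather than a black box; the ``diagonal plus centraliser'' calculation above is a few lines, but the structural reduction is where subtlety might creep in (for instance, one must verify that at each inductive step the two factors produced by Goursat are both actually simple or products of simples, which is where Lemma~\ref{lem:ntproj} can be useful).
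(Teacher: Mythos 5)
Your argument is correct, but it takes a genuinely different route from the paper's. The paper proves the lemma by a short induction on $k$: writing $N = S \times T$ with $T = S^{k-1}$, the inductive hypothesis applied to the projection of $H$ onto $T$ lets one adjust $g \in N_N(H)$ by an element of $H$ so that $g \in S$; then for any $s \in S$, lifting $s$ to an element $h \in H$ gives $[g,s]=[g,h] \in S \cap H$, and since $S\cap H \unlhd S$ and $Z(S)=1$, a nontrivial $g$ forces $S \le H$ and hence $g \in H$. That argument is self-contained and never needs to know what $H$ actually looks like. You instead invoke the structure theorem for subdirect products of $S^k$ (a direct product of full diagonal subgroups over a partition of the coordinates), observe that the normaliser splits blockwise, and finish with an explicit centraliser computation for a single full diagonal. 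Both of those steps are correct --- the blockwise splitting holds because $H$ is the internal direct product of the $D_j$, and the diagonal calculation rests on $C_S(\phi_i(S)) = Z(S) = 1$ --- and your approach buys a more explicit description of $H$ and its normaliser. The price is that essentially all the work is pushed into the structure theorem, which you leave as a citation or a sketched Goursat induction, whereas the paper's commutator trick reaches the conclusion without classifying $H$ at all. One small caveat: your parenthetical claim that the decomposition ``can be read off by applying Lemma~\ref{lem:ntproj} to the minimal normal subgroups of $H$'' does not work as stated, since that lemma requires a normal subgroup $N$ of the ambient group together with a complementary normal subgroup $K$, a configuration not available inside $H$; as this is offered only as an alternative to the Goursat argument, it does not affect the correctness of your main line.
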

\begin{proof} We use induction on $k$. When $k=1$ we have $H=N$ and the result
holds. Suppose $k>1$, write $N = S \times T$ with $T = S^{k-1}$, and let
$g \in N_N(H)$. Then by the inductive hypothesis, the projection of $N_N(H)$
onto $T$ lies in the projection of $H$ onto $T$ so, by replacing $g$ by a
product with an element in $H$, we can assume that $g \in S$. If $g \ne 1$,
then there exists $s \in S$ with $[g,s] \ne 1$ and, since $H$ projects onto $S$,
 we have $[g,s] \in H$. But then, since $S \cap H \unlhd S$,
we get $S \le H$ and so $g \in H$, which proves the result.
\end{proof}

\begin{lemma}\label{lem:subembed}
Let $H \le G$ with $|G:H|=k$ and suppose that there is an embedding
$H \to \Sym(n)$ for some $n$. Then there is an embedding $G \to \Sym(kn)$.
\end{lemma}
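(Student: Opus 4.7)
The plan is to use the standard induced permutation representation, which embeds $G$ into the wreath product $\Sym(n) \wr \Sym(k)$, a group of degree $kn$.

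Concretely, let $\varphi : H \to \Sym(n)$ be the given faithful embedding, and fix a right transversal $t_1, \ldots, t_k$ of $H$ in $G$. For each $g \in G$ and each $i \in \{1,\ldots,k\}$, there are unique $h_{i,g} \in H$ and $\sigma_g(i) \in \{1,\ldots,k\}$ such that $t_i g = h_{i,g}\, t_{\sigma_g(i)}$. I would then define a map $\Phi : G \to \Sym(\{1,\ldots,k\} \times \{1,\ldots,n\})$ by setting
\[
(i,\omega) \cdot \Phi(g) = \bigl(\sigma_g(i),\ \omega \cdot \varphi(h_{i,g})\bigr).
\]

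First I would verify that $\Phi$ is a group homomorphism. Applying the defining identity twice gives $t_i(g_1 g_2) = h_{i,g_1} t_{\sigma_{g_1}(i)} g_2 = h_{i,g_1} h_{\sigma_{g_1}(i),g_2}\, t_{\sigma_{g_2}(\sigma_{g_1}(i))}$, so $\sigma_{g_1 g_2} = \sigma_{g_2} \sigma_{g_1}$ and $h_{i,g_1 g_2} = h_{i,g_1} h_{\sigma_{g_1}(i),g_2}$; substituting these into the formula for $\Phi$ confirms that $\Phi(g_1 g_2)$ acts as $\Phi(g_1) \Phi(g_2)$. Next, for faithfulness, suppose $\Phi(g)$ is the identity. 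Then $\sigma_g(i) = i$ for every $i$, so $t_i g = h_{i,g} t_i$; and $\omega \cdot \varphi(h_{i,g}) = \omega$ for every $\omega$, which by faithfulness of $\varphi$ forces $h_{i,g} = 1$. Hence $t_i g = t_i$, and so $g = 1$.

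There is really no serious obstacle here: the only thing that requires any care is setting the conventions (left versus right action, and the direction of the transversal identity $t_i g = h_{i,g} t_{\sigma_g(i)}$) consistently so that $\Phi$ comes out as a homomorphism rather than an anti-homomorphism. Once those conventions are pinned down, both the homomorphism property and the faithfulness reduce to immediate checks, and the conclusion $G \hookrightarrow \Sym(kn)$ follows.
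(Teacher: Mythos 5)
Your proof is correct and is essentially the same construction the paper uses: both produce the induced permutation representation of $G$ of degree $kn$. The paper phrases it by taking the point stabilizers $H_1,\ldots,H_t$ of the orbits of $H$ and letting $G$ act on the disjoint union of the coset spaces $H_i \backslash G$ (of total size $\sum_i |G:H_i| = k\sum_i |H:H_i| = kn$), which is the same $G$-set as your $\{1,\ldots,k\}\times\{1,\ldots,n\}$ with the imprimitive wreath-product action; your transversal computation simply makes the homomorphism and faithfulness checks explicit.
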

\begin{proof}
Suppose that the image of the embedding $H \to \Sym(n)$ has $t$ orbits with
point stabilizers $H_1,\ldots,H_t$. Then we can define an embedding
$G \to \Sym(kn)$ such that the image also has $t$ orbits, with the same
point stabilizers $H_1,\ldots,H_t$.
\end{proof}

The wreath product $W := U \wr_\Gamma V$ of groups
$U \le \Sym(\Delta)$ and $V \le \Sym(\Gamma)$ with product action is defined
in~\cite[Section 2.1]{DM}. This (right) action is on the set
$\Omega := \Fun(\Gamma,\Delta)$ with $W = \Fun(\Gamma,U) \rtimes V$ and
$$\phi^{(f,v)}(\gamma) := \phi(\gamma^{v^{-1}})^{f(\gamma^{v^{-1}})}
\ \ \ \mathrm{for}\ \phi \in \Omega,\,f \in \Fun(\Gamma,U),\,v \in V,\,
\gamma \in \Gamma.
$$
In particular, taking $f=1$, the identity element of $\Fun(\Gamma,U)$ that
maps all elements of $\Gamma$ to the identity element of $U$, and
identifying $V$ with the subgroup $\{(1,v): v \in V\}$, the action of $V$
on $\Omega$ is given by $\phi^v(\gamma) = \phi(\gamma^{v^{-1}})$.

\begin{lemma}\label{lem:prodactorb}
Let $W := U \wr_\Gamma V$ be the wreath product with product
action on $\Omega$ as just defined, with $|\Delta|>1$, and let
$\alpha,\beta \in \Delta$ with $\alpha \ne \beta$. Then there is subset
$\Gamma_{\alpha\beta}$ of $\Omega$ that is fixed by the subgroup $V$ of $W$,
and for which the action of $V$ on $\Gamma_{\alpha\beta}$ is equivalent to its
action on $\Gamma$.
\end{lemma}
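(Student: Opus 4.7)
The plan is to exhibit $\Gamma_{\alpha\beta}$ explicitly, as the image of an obvious $V$-equivariant injection $\Gamma \hookrightarrow \Omega$. For each $\gamma_0 \in \Gamma$ define the function $\phi_{\gamma_0} \in \Omega = \Fun(\Gamma,\Delta)$ by
\[
\phi_{\gamma_0}(\gamma) = \begin{cases} \beta & \text{if } \gamma = \gamma_0,\\ \alpha & \text{otherwise,}\end{cases}
\]
which makes sense because $|\Delta|>1$ and $\alpha\ne\beta$ are at our disposal. I would then set $\Gamma_{\alpha\beta} := \{\phi_{\gamma_0} : \gamma_0 \in \Gamma\}$. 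Since the values of $\phi_{\gamma_0}$ at different points distinguish $\gamma_0$, the map $\gamma_0 \mapsto \phi_{\gamma_0}$ is injective, so $|\Gamma_{\alpha\beta}| = |\Gamma|$.

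The verification splits into two short calculations using the formula $\phi^v(\gamma) = \phi(\gamma^{v^{-1}})$ for the action of $V \le W$. First, one computes directly that $\phi_{\gamma_0}^v(\gamma) = \phi_{\gamma_0}(\gamma^{v^{-1}})$ equals $\beta$ exactly when $\gamma^{v^{-1}} = \gamma_0$, i.e.\ when $\gamma = \gamma_0^v$; hence $\phi_{\gamma_0}^v = \phi_{\gamma_0^v}$. This simultaneously shows that $\Gamma_{\alpha\beta}$ is $V$-invariant and that the bijection $\gamma_0 \mapsto \phi_{\gamma_0}$ intertwines the action of $V$ on $\Gamma$ with its action on $\Gamma_{\alpha\beta}$, which is exactly the required equivalence of $V$-actions.

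There is no real obstacle here; the work is essentially a check that the construction does what one would expect. The only point that deserves a moment's attention is why the two distinct symbols $\alpha,\beta \in \Delta$ are needed at all: they are precisely what lets us encode the single distinguished coordinate $\gamma_0$ inside $\Omega$ while keeping the resulting functions pairwise distinct, which is why the hypothesis $|\Delta|>1$ is stated.
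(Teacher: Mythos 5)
Your proof is correct and is essentially identical to the paper's: both construct $\Gamma_{\alpha\beta}$ as the set of functions taking one value at a single distinguished coordinate and the other value elsewhere (the paper merely swaps the roles of $\alpha$ and $\beta$), and both use $\gamma_0 \mapsto \phi_{\gamma_0}$ as the equivalence. You additionally write out the computation $\phi_{\gamma_0}^v = \phi_{\gamma_0^v}$ that the paper leaves as ``straightforward to check.''
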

\begin{proof} For $\gamma \in \Gamma$ define $\phi_\gamma \in \Omega$
by $\phi_\gamma(\gamma)=\alpha$ and $ \phi_\gamma(\delta)=\beta$ for
$\delta \in \Gamma \setminus \{\gamma\}$, and let
$\Gamma_{\alpha\beta} := \{ \phi_\gamma : \gamma \in \Gamma \}$.
It is straightforward to check that $\Gamma_{\alpha\beta}$ has the required
property with the equivalence $\gamma \mapsto \phi_\gamma$ between
$\Gamma$ and $\Gamma_{\alpha\beta}$.
\end{proof}

The O'Nan-Scott Theorem for a finite primitive permutation group $G$ of degree
$n$ on the set $\Omega$ with socle $H$ is stated (and later proved)
in~\cite[Theorem 4.1A]{DM} (see also~\cite[Theorem 4.6A]{DM}).
The four possibilities when $H=S^k$ is nonabelian with $S$ simple are described
in Case (b) of the theorem. In Case (b)\,(i) $k=1$ and $G$ is almost simple.
In Case (b)\,(ii) $G$ is of diagonal type with $n=|S|^{k-1}$ and the point
stabiliser $H_\alpha$ is a diagonal subgroup of $S$.
In Case (b)\,(iv), $H$ acts regularly on $\Omega$, so $n=|S|^k$. Finally, in
Case (b)\,(iii), we have $k>1$, and $G$ is contained in a wreath product
$U \wr \Sym(k/d)$ with product action, where $U$ is a primitive permutation
group of almost simple or diagonal type with socle $S^d$ and degree $\ell$,
and $n = \ell^{k/d}$.
The group $U$ is almost simple if and only if $d=1$, in which case $n =\ell^k$,
and we shall say that $G$ is primitive of \emph{almost simple product type}.
Note also that $\ell = |S|^{d-1}$ when $U$ has diagonal type, so
$n \ge |S|^{k/2}$ in this case.

\begin{lemma}\label{lem:minprimdeg}
Suppose that $G \le \Sym(\Omega)$ has a unique minimal normal $N$, where
$N = S^k$ for some nonabelian simple group $S$, and suppose that $N^\Omega$
is transitive. Let $S$ be one of the simple factors of $N$, and let $T$ be
the almost simple group $N_G(S)/C_G(S)$. Then $|\Omega| \ge P(T)^k$.
Furthermore, there are embeddings $G \to T \wr \Sym(k) \to \Sym(P(T)^k)$ for
which the image of $N$ in $\Sym(P(T)^k)$ acts transitively.
\end{lemma}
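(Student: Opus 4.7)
The plan is to prove the lemma in three stages. First I would embed $G \hookrightarrow T \wr \Sym(k)$ by conjugation on $N$. Since $N = S^k$ has trivial centre and is the unique minimal normal subgroup of $G$, the centralizer $C_G(N)$ is trivial: it is a normal subgroup of $G$ that meets $N$ trivially, and by uniqueness of the minimal normal subgroup together with Lemma~\ref{lem:ntproj} it must equal $1$. Conjugation then gives $G \hookrightarrow \Aut(N) \cong \Aut(S) \wr \Sym(k)$; and because for each $i$ the subgroup of $\Aut(S_i)$ induced by $N_G(S_i)$ is exactly $N_G(S_i)/C_G(S_i) \cong T$, the image of $G$ lies in $T \wr \Sym(k)$.

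For the bound $|\Omega| \ge P(T)^k$, I would study the projections $R_i := \pi_i(N_\omega) \le S_i$ of the point stabilizer. $N$-transitivity gives $G = G_\omega N$, so $G_\omega$ permutes the factors $S_i$ transitively; hence the $R_i$ are $G_\omega$-conjugate and share a common order, and from $N_\omega \le R_1 \times \cdots \times R_k$ we obtain $|\Omega| = |N:N_\omega| \ge (|S|/|R_1|)^k$. The key step is to show $|S:R_1| \ge P(T)$. Setting $H_1 := G_\omega \cap N_G(S_1)$ and letting $T_1 \le T$ be its image under $N_G(S_1) \twoheadrightarrow T$, I would show that $T_1$ is core-free in $T$: since $T$ is almost simple with socle $S$, this amounts to $S \not\le T_1$, and the contrary assumption would (by the structure of $H_1 \cdot C_G(S_1)$ in $G$) yield $\omega^{S_1} \subseteq \omega^{C_G(S_1)}$, from which Lemmas~\ref{lem:ntproj} and~\ref{lem:normsd} applied to suitable $G$-normal subgroups would force some $S_i \le G_\omega$, contradicting the faithful transitive action of $N$ on $\Omega$. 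Core-freeness then gives $|T:T_1| \ge P(T)$, and matching $T_1 \cap S$ with $R_1$ via the image of $N_\omega$ in $T$ upgrades this to $|S:R_1| \ge P(T)$.

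For the final step, I would choose a faithful transitive action of $T$ on a set $\Delta$ of size $P(T)$ in which $S$ itself acts transitively (this exists for almost simple $T$ by a short separate argument about choosing a minimum-index core-free subgroup $H \le T$ with $SH = T$). The product action of $T \wr \Sym(k)$ on $\Delta^k$ then gives an embedding into $\Sym(P(T)^k)$ in which the image of $N = S^k$ acts componentwise and hence transitively, and composing with the embedding from the first step yields the required chain $G \to T \wr \Sym(k) \to \Sym(P(T)^k)$. The principal obstacle will be the core-freeness argument for $T_1$ and the precise identification of $|T:T_1|$ with $|S:R_1|$: the delicate interplay among $H_1$, $N_\omega$, and $C_G(S_1)$ must be tracked carefully, and Lemmas~\ref{lem:ntproj} and~\ref{lem:normsd} are the natural tools. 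The auxiliary claim about almost simple groups supplying the set $\Delta$ is itself nontrivial and likely merits its own small lemma.
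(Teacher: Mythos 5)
Your first and last stages (the embedding $G \to T \wr \Sym(k)$ via $C_G(N)=1$ and conjugation on the factors, and the product action on $\Delta^k$) match the paper, which gets the embedding from \cite[Section 3.2]{CH04}. The problem is the middle stage, where all the content lies, and there your argument has a genuine gap: the claim that $T_1$ is core-free in $T$ --- equivalently, that the projection $R_1$ of $N_\omega$ onto a simple factor is a proper subgroup of $S$ --- is false in general. If $G$ is primitive of diagonal type (for example $G = (S \times S)\rtimes C_2$ with the $C_2$ swapping the two factors, acting on the $|S|$ cosets of the diagonal), then $N_\omega$ is a full diagonal subgroup of $S^k$, so $R_1 = S$, $S \le T_1$, and your bound $|\Omega| \ge (|S|/|R_1|)^k$ degenerates to $|\Omega| \ge 1$. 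Lemma~\ref{lem:normsd} is telling you precisely that a proper subdirect subgroup with full projections onto the factors exists and is self-normalizing, so the contradiction ``some $S_i \le G_\omega$'' that you hope to extract from $R_1 = S$ is simply not there. All hypotheses of the lemma are satisfied by diagonal-type groups, so this case must be covered.

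The lemma is still true in the diagonal (and product-of-diagonals, and regular) cases, but for a reason your proposal never touches: there the degree is $|S|^t$ with $t \ge k/2$, and one needs the numerical fact that $P(U)^2 < |S|$ for every almost simple $U$ with socle $S$ (which the paper checks from \cite{Guestetal}, \cite{Atlas} and \cite{CHU}) to conclude $|S|^{k/2} \ge P(T)^k$. The paper's proof first passes to a block system so that $G$ is primitive, then runs through the nonabelian cases of the O'Nan--Scott theorem: in the almost simple product type case the degree is $\ell^k$ with $\ell \ge P(U) \ge P(T)$ (the point being that $T \le U$ because the outer automorphisms of $S$ induced by $N_G(S)$ must be realised in $U$), and in every other case the degree is $|S|^t$ with $t \ge k/2$, which is handled by the inequality above. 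Your counting via the projections of $N_\omega$ works, with some care, only in the almost simple product type case; without the case analysis and the numerical input $P(T)^2 < |S|$ it cannot be completed.
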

\begin{proof} If $k=1$ then $G=T$ and the result is clear, so suppose that
$k>1$.  Since $N^\Omega$ is transitive, it acts non-trivially on any
block system preserved by $G$ on $\Omega$ so, by replacing $G$ by its action on any such block system, we may assume that $G^\Omega$ is primitive.
As we saw above, if $G$ is of almost simple product type then $n = \ell^k$,
where $\ell$ is the degree of a primitive permutation representation of an
almost simple group $U$ with socle $S$, and otherwise $n = |S|^t$ for some
$t \ge k/2$.
On the other hand, if $T = N_G(S)/C_G(S)$ as in the statement of the lemma,
then by~\cite[Section 3.2]{CH04} there is an embedding $G \to T \wr \Sym(k)$,
and the final assertion of the lemma follows by considering the product
action of $T \wr \Sym(k)$ using a (necessarily primitive) permutation
representation of $T$ of degree $P(T)$.

It is straightforward to check from~\cite[Table 4]{Guestetal} for simple groups
of Lie type, \cite{Atlas} for the sporadic groups,
and~\cite[Proposition 2.2]{CHU} for almost simple groups that, for any almost
simple group $U$ with socle $S$, we have $P(U)^2 < |S|$. So a faithful
permutation representation of $G$ of smallest possible degree such that $N$
acts transitively is primitive of almost simple product type. Furthermore,
given such a representation with $G$ embedding into $U \wr \Sym(k)$, the outer
automorphisms of $S$ that are induced by elements in $N_G(S)/S$ must be induced
by elements of $U$, and so we have $T \le U$, and the result follows.
\end{proof}

\begin{lemma}\label{lem:minwpquot}
Let $T$ be an almost simple group with socle $S$ and suppose that
$G \le T \wr \Sym(k)$, where $N = S^k \le G$.
Then $P(G/N) \le k|T/S| \le 2kP(T)/5$.
\end{lemma}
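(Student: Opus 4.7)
The plan is to split the lemma into its two inequalities and dispose of them separately. The first, $P(G/N) \le k|T/S|$, is structural and follows at once from the wreath product quotient, while the second, $k|T/S| \le 2kP(T)/5$, is a case analysis that reduces to the inequality $|T/S| \le 2P(T)/5$ for almost simple groups $T$ with socle $S$.

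For the first inequality, I would observe that $N = S^k$ is normal not just in $G$ but in all of $T \wr \Sym(k) = T^k \rtimes \Sym(k)$, and that the componentwise projection $T^k \to (T/S)^k$ extends to a surjection $T \wr \Sym(k) \twoheadrightarrow (T/S) \wr \Sym(k)$ with kernel $S^k = N$. Restricting this surjection to $G$ has kernel $G \cap N = N$, hence yields an embedding $G/N \hookrightarrow (T/S) \wr \Sym(k)$. Now $T/S$ acts faithfully on itself by right multiplication, so $(T/S) \wr \Sym(k)$ acts faithfully (in its imprimitive action) on $\Gamma \times (T/S)$, a set of size $k|T/S|$, giving $P(G/N) \le k|T/S|$.

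For the second inequality, since $T$ is almost simple with socle $S$, the quotient $T/S$ embeds into $\Out(S)$, so it suffices to check that $|\Out(S)| \cdot 5 \le 2P(T)$ for every possible $T$; equivalently $|T/S| \le 2P(T)/5$. This is a case analysis over the families of finite simple groups, reading the outer automorphism orders and minimal degrees from \cite[Table 4]{Guestetal}, \cite{Atlas}, and \cite[Proposition 2.2]{CHU}. The equality cases $T = \Sym(5)$ (where $|T/S|=2$, $P(T)=5$) and $T = \PGamL(2,9)$ (where $|T/S|=4$, $P(T)=10$) are tight, and show why the constant $2/5$ cannot be improved; in all other almost simple groups the inequality is strict and usually by a wide margin, since $|\Out(S)|$ grows only logarithmically in $|S|$ while $P(T) \ge P(S)$ grows polynomially.

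The cleanest organisation of the case analysis is by type of $S$: for $S$ sporadic, $|\Out(S)| \le 2$ and $P(S) \ge 11$, which handles all such $T$ trivially; for $S = \Alt(n)$ with $n \ne 6$, $|\Out(S)| = 2$ and $P(T) = n \ge 5$; for $\Alt(6)$, a brief look at the extensions $\Sym(6)$, $\PGL(2,9)$, $M_{10}$ and $\PGamL(2,9)$ confirms the bound, with the full $\Alt(6).2^2$ achieving equality; for $S$ of Lie type one uses the tabulated values. The main obstacle is simply ensuring this last bookkeeping is complete, but the inequality $P(T)^2 > |S| \ge |\Out(S)|^{\Theta(1)}$ already noted in the proof of Lemma~\ref{lem:minprimdeg} makes clear that nothing subtle is being swept under the rug outside the small list of tight cases.
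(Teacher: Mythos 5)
Your overall strategy coincides with the paper's: the first inequality comes from $G/N \le (T/S)\wr\Sym(k)$ acting (imprimitively) on $k|T/S|$ points, and the second is a case check over almost simple groups, which the paper packages by citing \cite[Lemma 4]{HT} (namely $P(S) > 3|\Out(S)|$ with exactly four exceptions) rather than re-running the classification tables. So the route is essentially the same; the problems lie in how you set up and close the case analysis.

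Two concrete issues. First, your reduction ``it suffices to check that $|\Out(S)|\cdot 5 \le 2P(T)$ \dots; equivalently $|T/S| \le 2P(T)/5$'' is not an equivalence: $|T/S|$ can be a proper divisor of $|\Out(S)|$, and the condition $5|\Out(S)|\le 2P(T)$ is actually false for, e.g., $T=\Sym(6)$, where $|\Out(S)|=4$ but $P(T)=6$. What must be proved is $|T/S|\le 2P(T)/5$ for each individual $T$, and the essential point---which your $\Alt(6)$ discussion uses correctly but your general setup obscures---is that when $|T/S|$ is large, $P(T)$ is forced to be strictly larger than $P(S)$. Second, the claim that outside your ``small list of tight cases'' the inequality holds ``by a wide margin'', so that the Lie-type bookkeeping hides nothing subtle, fails for $S=\PSL(3,4)$: there $|\Out(S)|=12$ while $P(S)=21$, so $2P(S)/5 = 8.4 < 12$, and the extensions with $|T/S|=12$ are only rescued because every such $T$ has $P(T)\ge 42$. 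This case needs the same individual attention you give to $\Alt(6)$; it is one of the four exceptions isolated in \cite[Lemma 4]{HT}, and your sketch as written would sail past it. ($\PSL(2,8)$, with $|\Out(S)|=3$ and $P(S)=9$, is also tight enough, $3\le 3.6$, to warrant explicit mention rather than absorption into ``tabulated values''.)
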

\begin{proof} Since $G/N \le (T/S) \wr \Sym(k)$, we have $P(G/N) \le k|T/S|$.
By~\cite[Lemma 4]{HT} we have $P(S) > 3|\Out(S)|$ except in the four
cases $(S,P(S),|\Out(S)|) = (A_6,6,4)$, $(\PSL(3,4),21,12)$, $(A_5,5,2)$, and
$(\PSL(2,8),9,3)$.  So, except in those four cases have
$k|T/S| \le k|\Out(S)| < kP(S)/3 \le kP(T)/3$, which proves the theeorem.
In fact we still have $k|T/S| \le 2kP(T)/5$ when \linebreak
$(S,P(S),|\Out(S)|)  = (A_5,5,2)$ or $(\PSL(2,8),9,3)$.
When $(S,P(S),|\Out(S)|) = (A_6,6,4)$ we have $\deg(T) \ge 10$ when $|T/S|=4$
and, when $(S,P(S),|\Out(S)|)  = (\PSL(3,4),21,12)$, we have
$P(T) \ge 42$ when $|T/S| =12$, so the result also holds in these cases.
\end{proof} 

\section{Proof of theorem}
We turn now to the proof of the theorem.  Assume that the group $G$ is acting
(faithfully) on the set $\Omega$ with $|\Omega|=n$.  We prove the theorem by
induction on $n$, and observe that, when proving the theorem for some
particular value of $n$, we can assume that the corollary holds for all $m < n$.
Let $G$ with normal subgroup $N$ be a counterexample of minimal degree.

\begin{lemma} $G$ is transitive on $\Omega$.
\end{lemma}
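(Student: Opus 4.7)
The plan is to argue by contradiction. I suppose that $G$ has more than one orbit on $\Omega$ and construct a faithful permutation representation of strictly smaller degree, either of $G/N$ directly or of $G$ itself. In either case the induction on $n$ (i.e.\ the inductive form of Corollary~\ref{cor:main}) then gives $G/N \hookrightarrow \Sym(m)$ for some $m<n$, contradicting the assumption that $(G,N)$ is a counterexample at degree $n$.

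To carry this out, I first pick a $G$-orbit $\Omega_1$ of size $n_1$ on which $N$ acts nontrivially (such an orbit exists because $N$ is nontrivial and $G$ is faithful on $\Omega$), and set $\Omega' := \Omega \setminus \Omega_1$. Writing $K_1$ and $K_1'$ for the kernels of the $G$-actions on $\Omega_1$ and $\Omega'$, minimality of $N$ forces $N \cap K_1 = 1$, and faithfulness of $G$ on $\Omega$ forces $K_1 \cap K_1' = 1$; since both $N$ and $K_1$ are normal in $G$ with trivial intersection, $NK_1 = N \times K_1$ internally. The quotient $G/K_1$ is transitive on $\Omega_1$ with normal subgroup $NK_1/K_1 \cong N$, so the inductive form of Corollary~\ref{cor:main} (applied at degree $n_1<n$) furnishes a faithful action of $G/NK_1$ on a set $X_1$ with $|X_1|<n_1$. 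Pulling this back through $G \to G/NK_1$ and combining with the action of $G$ on $\Omega'$ gives a $G$-action on $X_1 \sqcup \Omega'$ of degree $|X_1|+(n-n_1)<n$, whose kernel is $L := NK_1 \cap K_1'$.

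The main obstacle is the identification of $L$, and this is where Lemma~\ref{lem:ntproj} enters, with $K_1$ playing the role of $K$. If any element of $L$ projected nontrivially to a simple direct factor $S$ of $N$, that lemma would force $S \le L \le K_1'$, whence $N \cap K_1' \ne 1$ and so $N \le K_1'$ by minimality. This dichotomy yields two short cases. If $N \not\le K_1'$, no element of $L$ has nontrivial projection to any simple factor of $N$, so $L \le K_1 \cap K_1' = 1$ and $G$ itself embeds faithfully in $\Sym(|X_1|+n-n_1)$; applying the inductive theorem at this smaller degree gives $G/N \hookrightarrow \Sym(m)$ for some $m<n$. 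If instead $N \le K_1'$, then $N \le L$ and one checks (using $NK_1 = N\times K_1$ together with $K_1 \cap K_1' = 1$) that $L=N$, so the combined action has kernel exactly $N$ and $G/N$ embeds directly in $\Sym(|X_1|+n-n_1)$. Either conclusion contradicts the counterexample assumption at degree $n$, so $G$ must be transitive on $\Omega$.
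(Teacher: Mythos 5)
Your proof is correct and follows essentially the same route as the paper: split off a $G$-orbit on which $N$ acts nontrivially (hence faithfully), apply the inductive hypothesis to that orbit, and control the kernel of the combined action by applying Lemma~\ref{lem:ntproj} to $K_1$, $N$, and the intersection with the kernel on the complement. The only difference is cosmetic: the paper also quotients the complementary part by $N^\Gamma$ so that the kernel of the combined map is exactly $N$ in all cases, whereas you leave the complement untouched and absorb the resulting case $L=1$ with one extra application of the induction; both work.
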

\begin{proof}
Since $N$ is a minimal normal subgroup of $G$, it acts either
trivially or faithfully on each orbit of $G$.
Suppose that $G$ is intransitive on $\Omega$, and let $\Delta$ be an
orbit of $G$ on $\Omega$ on which $N$ acts nontrivially, and hence faithfully.
Let $\Gamma = \Omega \setminus \Delta$.  By induction there are homomorphisms
$\rho_\Delta: G^\Delta \to \Sym(\Delta')$ with $\ker \rho_\Delta = N^\Delta$ and
$\rho_\Gamma: G^\Gamma \to \Sym(\Gamma)$ with $\ker \rho_\Gamma = N^\Gamma$,
where $\Delta'$ is a set with $|\Delta'|<|\Delta|$.
Define $\rho:G \to \Sym(\Delta' \cup \Gamma)$ by
$\rho(g) := \rho_1(g^\Delta)\rho_2(g^\Gamma)$.
We claim that $\ker \rho = N$, which contradicts the assumption that $G$ with
normal subgroup $N$ is a counterexample to the theorem.

Clearly $N \le \ker \rho$ so suppose that $g \in \ker \rho$.
Then $g^{\Delta} \in \ker \rho_1 = N^\Delta$, so there exists
$n_1 \in N$ with $g^\Delta = n_1^\Delta$, and then $gn_1^{-1}$
is in the kernel $K_1$ of the action of $G$ on $\Delta$.
Since $(gn_1^{-1})^\Gamma \in \ker \rho_2 = N^\Gamma$, there exists
$n_2 \in N$ with $gn_1^{-1}n_2^{-1}$ in the kernel $K_2$ of the action of
$G$ on $\Gamma$, and we also have
$gn_1^{-1}n_2^{-1} \in K_1 N = K_1 \times N$, because $N$ acts
faithfully on $\Delta$.
Now, by applying Lemma~\ref{lem:ntproj} to $K_1$, $N$, and
$L:=K_2 \cap K_1 N$, we find that any simple direct factor of $N$
onto which $n_2$ projects non-trivially lies in $K_2$, and so
$n_2 \in K_2$ and hence $gn_1^{-1} \in K_1 \cap K_2=1$,
so $g=n_1 \in N$.  This proves the claim and the lemma.
\end{proof}
 
\begin{lemma} $N$ is transitive on $\Omega$.
\end {lemma}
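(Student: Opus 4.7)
The plan is to suppose $N$ is intransitive with orbits $B_1, \dots, B_k$ (for some $k \ge 2$) and derive a contradiction with the minimality of the counterexample. Since $G$ is transitive and $N$ is normal, the $B_i$ form a block system for $G$, permuted transitively by $G$; write $T$ for the induced transitive action on $B = \{B_1, \dots, B_k\}$ and $H = G_{B_1}^{B_1}$ (which is transitive on $B_1$ because $N \le G_{B_1}$ already is). After choosing $g_1 = 1, g_2, \dots, g_k \in G$ with $g_i B_1 = B_i$, the standard imprimitive construction embeds $G$ into $H \wr T$ with $N$ lying in the base group $H^k$. Set $\bar N := N^{B_1}$; this is a nontrivial normal subgroup of $H$ which is itself a direct product of nonabelian simple groups (nontrivial because a nontrivial normal subgroup of a transitive group has no fixed points, forcing $|B_i| > 1$).

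The heart of the argument is to show $M := G \cap \bar N^k$ equals $N$. Under the wreath embedding, $N$ maps into $\bar N^k$ via $\phi(n) = ((g_i^{-1} n g_i)|_{B_1})_i$; this is injective because its kernel is $\bigcap_i g_i K g_i^{-1} = 1$, where $K$ is the kernel of $N$ on $B_1$ and the intersection is the kernel of $N$ on $\Omega$. Each projection onto a copy of $\bar N$ is surjective since $g_i^{-1} N g_i = N$ by normality, so $N$ embeds as a subdirect product of $\bar N^k$---itself a direct product of nonabelian simple groups. Lemma~\ref{lem:normsd} then yields $N_{\bar N^k}(N) = N$. On the other hand, $\bar N \unlhd H$ makes $\bar N^k$ normal in $H \wr T$, so $M \unlhd G$ with $M \supseteq N$; the chain $N \unlhd G \supseteq M \le \bar N^k$ shows $N$ is normal in $M$, forcing $M \le N_{\bar N^k}(N) = N$ and so $M = N$.

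Having established $M = N$, the induction closes quickly. I would apply Corollary~\ref{cor:main} to the transitive group $H \le \Sym(B_1)$ with normal subgroup $\bar N$ (valid since $|B_1| = n/k < n$): this gives $H/\bar N \hookrightarrow \Sym(m_0)$ with $m_0 \le 2n/(5k)$. The induced maps $G \hookrightarrow H \wr T \twoheadrightarrow (H/\bar N) \wr T \hookrightarrow \Sym(m_0) \wr T \hookrightarrow \Sym(k m_0)$---with the second arrow the quotient by $\bar N^k$ and the last the imprimitive wreath embedding---have overall kernel $G \cap \bar N^k = N$, so $G/N \hookrightarrow \Sym(k m_0) \le \Sym(2n/5)$, contradicting the counterexample assumption. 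The main obstacle is identifying $M$ with $N$: once one thinks to apply Lemma~\ref{lem:normsd} to $N$ as a subdirect product of the simple-product group $\bar N^k$, the rest is routine wreath-product bookkeeping.
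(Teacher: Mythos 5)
Your proof is correct, and although the outer skeleton coincides with the paper's (embed $G$ into the wreath product over the block system of $N$-orbits, factor out the base-group normal subgroup, and apply Corollary~\ref{cor:main} inductively to the block action to get the $2n/5$ bound), you identify the kernel of the quotient map with $N$ by a genuinely different device. The paper shows $\ker\bigl(G \to (G_\Delta^\Delta/N^\Delta)\wr G^\Gamma\bigr) = N$ by an induction over the blocks $\Delta_1,\ldots,\Delta_t$, repeatedly multiplying a kernel element by factors from $N$ so that it acts trivially on one more block at each step, and invoking Lemma~\ref{lem:ntproj} each time to push the correcting factor into the relevant kernel. You instead observe that $N$ sits inside $\bar N^k$ as a subdirect product (injective because the block kernels intersect trivially, surjective onto each coordinate by normality of $N$), so Lemma~\ref{lem:normsd} makes $N$ self-normalizing in $\bar N^k$; since $M = G\cap\bar N^k$ is a subgroup of $G$ containing $N$ and lying in $\bar N^k$, it normalizes $N$ and hence equals $N$ in one stroke. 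Your route is shorter and avoids the block-by-block bookkeeping; its only cost is a small gloss you should make explicit, namely that Lemma~\ref{lem:normsd} is stated for a power $S^k$ of a single simple group, so you need to note that $\bar N$ is a quotient of the characteristically simple group $N\cong S^m$ and hence $\bar N^k\cong S^{jk}$ (or remark that the lemma's proof works verbatim for any direct product of nonabelian simple groups). The remaining estimates, $m_0\le 2|B_1|/5$ from the inductive use of the corollary and $km_0\le 2n/5$, match the paper exactly.
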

\begin{proof}
Suppose that $N$ is intransitive on $\Omega$. Then $G$ is imprimitive on
$\Omega$ and preserves the block system $\Gamma$, the set of orbits of $N$.
Let $\Delta \in \Gamma$.  Then by~\cite[Theorem 1.8]{Cameron} there is an
embedding $\alpha: G \to G_{\Delta}^\Delta \wr G^\Gamma$. Since
$N^\Delta \unlhd G_{\Delta}^\Delta$, there is a natural projection
$\pi: G_{\Delta}^\Delta \wr G^\Gamma \to
G_{\Delta}^\Delta/N^\Delta \wr G^\Gamma$.

We claim that the kernel of the composite map
$\pi\alpha:G \to G_{\Delta}^\Delta/N^\Delta \wr G^\Gamma$ is equal to $N$.
This kernel clearly contains $N$ so, for $g \in \ker \pi\alpha$, we need to
show that $g \in N$. Certainly $g$ is in the kernel $K$ of the induced action
of $G$ on $\Gamma$; that is, $g$ maps to an element of the base group of the
wreath product. Let $\Gamma = \{ \Delta=\Delta_1,\Delta_2,\ldots,\Delta_t\}$.
Then $g^{\Delta_i} \in N^{\Delta_i}$ for $1 \le i \le t$.
To prove the claim, we will successively replace $g$ by $gn$ for elements
$n \in N$ for $1 \le i \le t$ such that, after the $i$-th such replacement,
$g$ acts trivially on $\Delta_1, \cup  \cdots \cup \Delta_i$.
We can do this for $i=1$ because $g^{\Delta_1} \in N^{\Delta_1}$.

Suppose by induction that $g$ acts trivially on
$\Delta' := \Delta_1 \cup \cdots \cup \Delta_{i-1}$ for some $i$ with
$1 < i \le t$, and let $K_1$ and $K_2$ be the kernels of the actions of $K$
on $\Delta'$ and $\Delta_i$, respectively. So $g \in K_1$.

Since $N$ is a direct product of nonabelian simple groups that are permuted
under the action of conjugation by elements of $K$, we can write
$N = N_1 \times N_2$, where $N_1$ and $N_2$ are normal in $K$, and the simple
direct factors of $N$ in $N_1$ lie in $K_1$, but those of $N_2$ do not.
So $N_1 \le K_1$ and $K_1 \cap N_2 = 1$.
We have $g^{\Delta_i} = (n_1n_2)^{\Delta_i}$
for some $n_1 \in N_1$ and $n_2 \in N_2$, and then
$gn_1^{-1}n_2^{-1} \in K_2 \cap K_1N_2$.
Now, by applying Lemma~\ref{lem:ntproj} to the normal subgroups $K_1$, $N_2$,
and $L := K_2 \cap K_1N_2$, we find that all of the factors of $N_2$ onto which
$n_2$ projects nontrivially lie in $K_2$, so $n_2 \in K_2$ and hence
$gn_1^{-1} \in K_1 \cap K_2$,
which completes the induction and the proof of the claim.

Now, by the minimality of $G$ and $N$,we can apply Corollary~\ref{cor:main} to
$N^\Delta \unlhd G_{\Delta}^\Delta$, to find an embedding
$\rho: G_{\Delta}^\Delta/N^\Delta \to \Sym(\Delta')$
and $|\Delta'| \le 2|\Delta|/5$, which induces an embedding
$\bar{\rho} : G_{\Delta}^\Delta/N^\Delta \wr G^\Gamma \to \Sym(\Omega')$
with $|\Omega'| \le 2|\Omega|/5$.
Then the composite map $\bar{\rho}\pi\alpha:G \to \Sym(\Omega')$ has
kernel $N$, which proves the lemma.
\end{proof}

We have $N \cong S^k$ for some nonabelian simple group $S$ and $k \ge 1$,
where the simple direct factors are permuted transitively under the conjugation
action of $G$.  We have $\Aut(N) \cong \Aut(S) \wr \Sym(k)$, and
$\Out(N) \cong \Out(S) \wr \Sym(k)$, which embeds into $\Sym(k|\Out(S)|)$.
Let $C = C_G(N)$.

Suppose first that $C=1$. Then $N$ is the unique minimal normal subgroup
of $G$, and by Lemma~\ref{lem:minprimdeg} we have $n \ge |P(T)|^k$ with
$T = N_G(S)/C_G(S)$. Furthermore we have $P(G/N) \le 2kP(T)/5$ by
Lemma~\ref{lem:minwpquot}, and the result follows because $kP(T) \le P(T)^k$.
So we may assume that $|C|>1$.

For $\alpha \in \Omega$ let $H:=G_\alpha$ be a point stabiliser,
so $n = |G:H|$ and, since $N^\Omega$ is transitive, we have $G=NH$.
We have $C_{\Sym(\Omega)}(N) \cong N_N(H \cap N)/H \cap N$
by~\cite[Theorem 4.2A\,(i)]{DM}. Since $G=NH$, the simple direct factors of $N$
are all conjugate under the conjugation action of $H$, so
the projection of $H \cap N$ onto each of the simple direct factors of $N$
must be the same. If this projection is equal to $S$, then
$N_N(H \cap N) = H \cap N$ by Lemma~\ref{lem:normsd}, so $C=1$ and
the result follows as above. On the other hand, if $H\cap N=1$, then
$N^\Omega$ is regular so $n=|N|=|S|^k$ and, by~\cite[Theorem 4.2A\,(iii)]{DM}
we have $G/N \le N_{\Sym(\Omega)}(N)/N \cong \Aut(S) \wr \Sym(k)$, so
$P(G/N)\le kP(\Aut(S)) < 2|S|/5 \le 2n/5$.

So we can assume that the projection of $H \cap N$ onto the simple direct
factors of $N$ is a nontrivial proper subgroup $R$ of $S$ and hence
$H \cap N \le R^k$.  Let $a := |R^k:H \cap N|$, $Q := N_S(R)$, and $b:= |Q:R|$.
Then $N_N(H \cap N) \le Q^k$, so $|C| \le |N_N(H \cap N)/H \cap N| \le ab^k$.

Since $H$ acting by conjugation permutes the factors of $S^k$ and $H$
normalises $H \cap N$, we have $H \le N_G(R^k)$ and hence $H \le N_G(Q^k)$.
So $H \le N_G(Q^kC)$, and we can choose a maximal subgroup $M$ of $G$
containing $HQ^kC$. Let $\Gamma$ be the set of blocks of imprimitivity
of $G$ containing the block $\alpha^M$. Then, since $N/C$ is the unique
minimal normal subgroup of $G/C$, the kernel of the action of $G$ on
$\Gamma$ is equal to $C$ and, since the stabiliser of a block contains $Q^k$,
we see from Lemma~\ref{lem:minprimdeg} and the preceding discussion
of the cases of the O'Nan-Scott Theorem that the induced action $G^\Gamma$ of
$G$ on $\Gamma$ must be primitive of almost simple product type and contained
in $T \wr \Sym(k)$ with product action, where $T \le \Aut(S)$ and
$|\Gamma| = \deg(T)^k$.

Now, by Lemma~\ref{lem:prodactorb},
in this action of $T \wr \Sym(k)$, the subgroup $\Sym(k)$ has orbits of length
$k$ on which it acts naturally. Let $D^\Gamma$ (with $C \le D \le G)$ be the
intersection of $G^\Gamma$ with this subgroup. Then $D$ acts faithfully
on a union of $k$ blocks in $\Gamma$, a set of size $k|M:H|$, and $D$
is isomorphic to a subgroup of index at most $|T/S|^k$ in $G/N$. 
So, by Lemma~\ref{lem:subembed}, there is a faithful action of $G/N$ on a
set of size at most $k|T/S|^k|M:H|$.

Since $n = |\Gamma||M:H|$ with $|\Gamma| = \deg(T)^k \ge P(T)^k$
and $|T/S| \le 2P(T)/5$ by Lemma~\ref{lem:minwpquot}, we have
\begin{equation*}
%\begin{split}
%\begin{align*}
\begin{aligned}
P(G/N) & \le k|T/S|^k|M:H|  \le k(2P(T)/5)^k|M:H|\\
&\le  k(2/5)^k|\Gamma||M:H| = k(2/5)^kn,
\end{aligned}
%\end{align*}
%\end{split}
\end{equation*}
and the result follows because $k(2/5)^k \le 2/5$ for all $k \ge 1$.


\begin{thebibliography}{99}
%\bibitem{AschGur}
%M.~Aschbacher and R.\,M.~Guralnick.
%On abelian quotients of primitive groups.
%{\em Proc. Amer.  Math. Soc.} 107 (1989), 89--95.

\bibitem{Cameron} P.\,J.~Cameron,
\emph{Permutation Groups},
London Mathematical Society Student Texts, \textbf{45}, CUP, Cambridge, 1999.

\bibitem{CH04}
J.\,J.~Cannon and D.\,F.~Holt,
\newblock Computing maximal subgroups of finite groups,
\newblock {\em J.\ Symbolic Comput.} {\bf 37} (2004), 589--609.

\bibitem{CHU}
J.\,J.~Cannon, D.\,F.~Holt, and W.\,R.~Unger,
The use of permutation representations in structural computations in large
finite matrix groups,
{\em J. of Symbolic Comput.} 95 (2019), 26--38.

\bibitem{Chamberlain}
Robert Chamberlain,
Subgroups with no abelian composition factors are not distinguished,
{\em Bull. Aust. Math. Soc.} {101}, no. 3 (2020), 446--452.


\bibitem{Atlas}
J.\,H.\ Conway, R.\,T.\ Curtis, S.\,P.\ Norton, R.\,A.\ Parker, R.\,A.\ Wilson,
{\em ATLAS of Finite Groups}, Oxford, 1985.

\bibitem{DM}
John D. Dixon and Brian Mortimer,
\emph{Permutation Groups}, volume 163 of Graduate Texts in Mathematics,
Springer-Verlag, 1996.

\bibitem{EP}
D.~Easdown and C.\,E.~Praeger, {\em On minimal faithful permutation
representations of finite groups}, Bull. Aust. Math. Soc. {\bf 38} (1988),
207--220.

\bibitem{Guestetal}
S.~Guest, J.~Morris, C.\,E.~Praeger and P.~Spiga,
On the maximum orders of elements of finite almost
  simple groups and primitive permutation groups,
{\em Trans. Amer. Math. Soc.} {\bf 367}, no. 11 (2015), 7665–-7694.

\bibitem{HT}
D.\,F.~Holt and G.~Tracey,
Minimal sized generating sets of permutation groups,
{arXiv:2404.03952}, Preprint, 2024.

\bibitem{LT} A.~Lucchini and D.~Thakkar,
The Minimum Generating Set Problem,
{J. Algebra} {\bf 640} (2024), 117--128.

\bibitem{Neu} P.\,M.~Neumann,
``Some algorithms for computing with finite permutation groups'',
in E.F. Robertson and C.M. Campbell (eds.), {\em Proceedings of
Groups-St Andrews 1985}, London Math. Soc. Lecture Notes 121, CUP, 59--92,1987.
\end{thebibliography}
\end{document}